\title[A birational embedding with two Galois points]{A birational embedding with two Galois points for quotient curves} 
\author{Satoru Fukasawa \and Kazuki Higashine}
\subjclass[2010]{14H50, 14H05, 14H37}
\keywords{Galois point, plane curve, Galois group, automorphism group}
\address{Department of Mathematical Sciences, Faculty of Science, Yamagata University, Kojirakawa-machi 1-4-12, Yamagata 990-8560, Japan} 
\email{s.fukasawa@sci.kj.yamagata-u.ac.jp} 
\address{Graduate School of Science and Engineering, Yamagata University, Kojirakawa-machi 1-4-12, Yamagata 990-8560, Japan}
\email{s16m111m@st.yamagata-u.ac.jp}
\thanks{The first author was partially supported by JSPS KAKENHI Grant Number 16K05088.}
\newtheorem{theorem}{Theorem}
\newtheorem{corollary}{Corollary} 
\newtheorem{fact}{Fact}
\theoremstyle{definition}
\newtheorem{remark}{Remark}
\begin{document}
\begin{abstract} 
A criterion for the existence of a birational embedding with two Galois points for quotient curves is presented. 
We apply our criterion to several curves, for example, some cyclic subcovers of the Giulietti--Korchm\'{a}ros curve or of the curves constructed by Skabelund. 
They are new examples of plane curves with two Galois points.  
\end{abstract}

\maketitle 

\section{Introduction} 

The notion of {\it Galois point} was introduced by Hisao Yoshihara  in 1996 (\cite{miura-yoshihara, yoshihara}): for a plane curve, a point in $\mathbb{P}^2$ is called a Galois point, if the function field extension induced by the projection is a Galois extension. 
If a Galois point is a smooth point, then we call it an inner Galois point. 

Let $\mathcal{C}$ be a (reduced, irreducible) smooth projective curve over an algebraically closed field $k$ of characteristic $p \ge 0$ and let $k(\mathcal{C})$ be its function field. 
Recently, a criterion for the existence of a birational embedding with two Galois points was presented by the first author (\cite{fukasawa1}), and by this criterion, several new examples of plane curves with two Galois points were described. 
We recall this criterion. 

\begin{fact} \label{criterion} 
Let $G_1$ and $G_2$ be finite subgroups of ${\rm Aut}(\mathcal{C})$ and let $P_1$ and $P_2$ be different points of $\mathcal{C}$.
Then, three conditions
\begin{itemize}
\item[(a)] $\mathcal{C}/{G_1} \cong \Bbb P^1$ and $\mathcal{C}/{G_2} \cong \Bbb P^1$,    
\item[(b)] $G_1 \cap G_2=\{1\}$, and
\item[(c)] $P_1+\sum_{\sigma \in G_1} \sigma (P_2)=P_2+\sum_{\tau \in G_2} \tau (P_1)  $
\end{itemize}
are satisfied, if and only if there exists a birational embedding $\varphi: \mathcal{C} \rightarrow \mathbb P^2$ of degree $|G_1|+1$ such that $\varphi(P_1)$ and $\varphi(P_2)$ are different inner Galois points for $\varphi(\mathcal{C})$ and the associated Galois group $G_{\varphi(P_i)}$ is equal to $G_i$ for $i=1, 2$. 
\end{fact}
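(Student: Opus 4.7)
The two directions translate between the linear system $\langle 1,x,y\rangle$ on $\mathcal{C}$ and the pair of projections from the two inner Galois points; I treat them separately.

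For the forward direction, using (a), I would pick $x,y\in k(\mathcal{C})$ generating $k(\mathcal{C})^{G_1}=k(x)$ and $k(\mathcal{C})^{G_2}=k(y)$. Since $\mathcal{C}/G_i\cong \mathbb{P}^1$ and any two effective divisors of equal degree on $\mathbb{P}^1$ are linearly equivalent, I can normalize so that
$$(x)_0 = \sum_{\sigma\in G_1}\sigma(P_1), \quad (x)_\infty = \sum_{\sigma\in G_1}\sigma(P_2),$$
$$(y)_0 = \sum_{\tau\in G_2}\tau(P_2), \quad (y)_\infty = \sum_{\tau\in G_2}\tau(P_1);$$
this presupposes $P_2\notin G_1\cdot P_1$ and $P_1\notin G_2\cdot P_2$, which I would verify follow from (c). Under this normalization, (c) reduces to the clean identity $D := P_1+(x)_\infty = P_2+(y)_\infty$, an effective divisor of degree $|G_1|+1$. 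Setting $\varphi = (x:y:1)$, condition (b) together with the Galois correspondence gives $k(x,y) = k(\mathcal{C})^{G_1\cap G_2} = k(\mathcal{C})$, so $\varphi$ is birational onto its image, with $\varphi(P_1)=(0:1:0)$ and $\varphi(P_2)=(1:0:0)$. Comparing coefficients at $P_1$ in (c) forces $|\mathrm{Stab}_{G_2}(P_1)|=1$, so $1/y$ is a local uniformizer of $\mathcal{C}$ at $P_1$; this makes $\varphi$ a local isomorphism at $P_1$, and symmetrically at $P_2$. Hence $\varphi(P_i)$ is smooth of multiplicity one; the degree of the image equals this multiplicity plus the degree $|G_1|$ of the projection from $\varphi(P_1)$, namely $|G_1|+1$; and the projection $(X:Y:Z)\mapsto(X:Z)$ from $\varphi(P_1)=(0:1:0)$ corresponds to the function $x$, so its Galois group is $G_1$ (symmetrically for $\varphi(P_2)$).

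For the converse, let $x$ and $y$ be the functions on $\mathcal{C}$ pulled back from the projections $\pi_1, \pi_2$ of $\varphi(\mathcal{C})$ from $\varphi(P_1), \varphi(P_2)$, respectively. The Galois hypothesis on $\pi_i$ identifies $k(x) = k(\mathcal{C})^{G_1}$ and $k(y) = k(\mathcal{C})^{G_2}$, yielding (a). Birationality of $\varphi$ gives $k(x,y) = k(\mathcal{C})$, which by Galois theory translates to $G_1\cap G_2 = \{1\}$, yielding (b). For (c), after a projective change of coordinates placing $\varphi(P_1), \varphi(P_2)$ at $(0:1:0), (1:0:0)$, the functions $x, y$ take the normalized form above (up to affine scaling). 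The pullback $\varphi^*(Z=0)$ is then effective of degree $\deg\varphi(\mathcal{C}) = |G_1|+1$, and the smoothness and multiplicity-one conditions at each $\varphi(P_i)$ force this pullback to equal both $P_1+(x)_\infty$ and $P_2+(y)_\infty$, which gives (c).

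The main obstacle is the two-way passage between the single divisor identity (c) and the geometric data of the embedding. Condition (c) simultaneously packages three facts that have to be unpacked: (i) the stabilizer equalities $|\mathrm{Stab}_{G_1}(P_2)|=|\mathrm{Stab}_{G_2}(P_1)|=1$, which guarantee that $\varphi$ is a local isomorphism at $P_1, P_2$ and hence that $\varphi(P_i)$ is smooth; (ii) the coincidence of $G_1\cdot P_2$ with $G_2\cdot P_1$ off $\{P_1, P_2\}$, which produces the pole cancellations bringing $\deg\varphi(\mathcal{C})$ down from $|G_1|+|G_2|$ to $|G_1|+1$; and (iii) the basepoint-freeness of the linear system $\langle 1,x,y\rangle\subset L(D)$. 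Establishing these three implications of (c), and in the converse showing that the smoothness and degree structure of $\varphi(\mathcal{C})$ forces them back, is the technical core of the argument.
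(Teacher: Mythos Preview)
The paper does not contain a proof of this statement. Fact~\ref{criterion} is quoted verbatim from the earlier article \cite{fukasawa1} (``We recall this criterion''), and is used as a black box throughout; the present paper's contribution is Theorem~\ref{main} and Corollary~\ref{maincor}, which reduce the quotient-curve setting back to Fact~\ref{criterion}. So there is no ``paper's own proof'' to compare your proposal against.

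That said, your outline is essentially the argument one expects (and, to my knowledge, the one given in \cite{fukasawa1}): build $\varphi=(x:y:1)$ from generators of the two fixed fields, use (b) and Galois correspondence to get birationality, and read off the degree and smoothness at $\varphi(P_i)$ from the divisor identity (c). One point to tighten: you assert that $P_2\notin G_1\!\cdot\!P_1$ and $P_1\notin G_2\!\cdot\!P_2$ ``follow from (c)'', but comparing coefficients in (c) alone does not immediately exclude, say, $P_2\in G_1\!\cdot\!P_1$ with a compensating large $\mathrm{Stab}_{G_2}(P_1)$. In practice the cleanest route is not to insist on that normalization of $(x)_0$ at all: choose $x$ only so that $(x)_\infty=\sum_{\sigma\in G_1}\sigma(P_2)$ (any fiber of $\mathcal{C}\to\mathcal{C}/G_1$ over a point distinct from the image of $P_2$ works for $(x)_0$), and likewise for $y$; then $D:=P_1+(x)_\infty=P_2+(y)_\infty$ by (c), and the coefficient comparison at $P_1$ and $P_2$ gives exactly $|\mathrm{Stab}_{G_2}(P_1)|=|\mathrm{Stab}_{G_1}(P_2)|=1$, which is what you actually need for smoothness of $\varphi(P_i)$. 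With that adjustment your plan goes through.
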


Some known examples of plane curves with two Galois points are obtained as quotient curves $\mathcal{C}/H$ of curves $\mathcal{C}$ with a subgroup $H \subset {\rm Aut}(\mathcal{C})$ such that $\mathcal{C}$ has  a birational embedding with two Galois points. 
Typical examples are quotient curves of the Hermitian curve  (\cite{fukasawa-higashine1, homma}), and the Hermitian curve as a Galois subcover of the Giulietti--Korchm\'{a}ros curve (\cite{fukasawa2, fukasawa-higashine2}). 

Motivated by this observation, the aim of this article is to present a criterion for the existence of a plane model with two Galois points for quotient curves. 
For a finite subgroup $H$ of ${\rm Aut}(\mathcal{C})$ and a point $Q \in \mathcal{C}$, the quotient map is denoted by $f_H: \mathcal{C} \rightarrow \mathcal{C}/H$ and the image $f_H(Q)$ is denoted by $\overline{Q}$. 
Furthermore, if $H$ is a normal subgroup of a subgroup $G \subset {\rm Aut}(\mathcal{C})$, then there exists a natural homomorphism $G \rightarrow {\rm Aut}(\mathcal{C}/H)$.  
The image is denoted by $\overline{G}$, which is isomorphic to $G/H$. 

\begin{theorem} \label{main} 
Let $H$, $G_1$, and $G_2 \subset {\rm Aut}(\mathcal{C})$ be finite subgroups with $H \vartriangleleft G_i$ for $i=1, 2$, and let $P_1$ and $P_2 \in \mathcal{C}$. 
Then, four conditions
\begin{itemize}
\item[(a')] $\mathcal{C}/{G_1} \cong \Bbb P^1$ and $\mathcal{C}/{G_2} \cong \Bbb P^1$,    
\item[(b')] $G_1 \cap G_2=H$, 
\item[(c')] $\sum_{h \in H}h(P_1)+\sum_{\sigma \in G_1} \sigma (P_2)=\sum_{h \in H}h(P_2)+\sum_{\tau \in G_2} \tau (P_1)$, and
\item[(d')] $HP_1 \ne HP_2$
\end{itemize}
are satisfied, if and only if there exists a birational embedding $\varphi: \mathcal{C}/H \rightarrow \mathbb P^2$ of degree $|G_1/H|+1$ such that $\varphi(\overline{P_1})$ and $\varphi(\overline{P_2})$ are different inner Galois points for $\varphi(\mathcal{C}/H)$ and $G_{\varphi(\overline{P_i})}=\overline{G_i}$ for $i=1, 2$. 
\end{theorem}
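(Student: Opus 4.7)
The plan is to reduce Theorem~\ref{main} to Fact~\ref{criterion}, applied to the quotient curve $\mathcal{C}/H$ with the data $(\overline{G_1}, \overline{G_2}, \overline{P_1}, \overline{P_2})$. Since Fact~\ref{criterion} produces a birational embedding of degree $|\overline{G_1}|+1 = |G_1/H|+1$ of $\mathcal{C}/H$ with inner Galois points $\overline{P_1}, \overline{P_2}$ and associated Galois groups $\overline{G_i}$, the conclusion matches that of Theorem~\ref{main} term-by-term. It therefore suffices to check that the four conditions (a')--(d') are equivalent to the three conditions (a)--(c) of Fact~\ref{criterion} together with the distinctness of $\overline{P_1}$ and $\overline{P_2}$.

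The three easy correspondences will come first. For (a'): the function field of $(\mathcal{C}/H)/\overline{G_i}$ is $(k(\mathcal{C})^H)^{G_i/H}=k(\mathcal{C})^{G_i}$, so $(\mathcal{C}/H)/\overline{G_i}\cong \mathcal{C}/G_i$, and (a') is (a). For (d'): the condition $HP_1\ne HP_2$ is simply $\overline{P_1}\ne\overline{P_2}$. For (b'): since the natural map $G_i\to\mathrm{Aut}(\mathcal{C}/H)$ has kernel $H$ by the remark preceding the theorem, $\overline{G_1}\cap\overline{G_2}=\{1\}$ if and only if every element of $G_1\cap G_2$ lies in $H$, which combined with $H\subseteq G_1\cap G_2$ is (b').

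The main step is (c')~$\Leftrightarrow$~(c). Since the ramification index of $f_H$ at $Q\in HP$ equals the stabilizer $|H_Q|$, which is constant along the $H$-orbit, one has the divisor identity $f_H^*(\overline{P})=\sum_{h\in H}h(P)$. Because $H\vartriangleleft G_i$, for any $\sigma\in G_i$ one has $\sigma(HP)=H\sigma(P)$, and hence $f_H^*(\bar\sigma\,\overline{P})=\sum_{h\in H}h\sigma(P)$. Writing $G_1$ as a disjoint union of right cosets $H\sigma_1,\ldots,H\sigma_m$ and summing over $\bar\sigma\in\overline{G_1}$, one gets
\[
f_H^*\!\left(\sum_{\bar\sigma\in\overline{G_1}}\bar\sigma(\overline{P_2})\right)=\sum_{\sigma\in G_1}\sigma(P_2),
\]
and analogously for the remaining three terms. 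Pulling back the equality (c) of Fact~\ref{criterion} by $f_H^*$ therefore yields (c'); conversely, since $f_H^*$ is injective on divisors, (c') implies (c).

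The main obstacle will be the bookkeeping in the last paragraph, and in particular making clear that the divisor $\sum_{h\in H}h\sigma(P_j)$ depends only on the coset $\bar\sigma\in G_i/H$ and not on the chosen representative $\sigma$; this is exactly the place where normality $H\sigma=\sigma H$ is used. Once this translation is in place, Theorem~\ref{main} follows at once by applying Fact~\ref{criterion} to $\mathcal{C}/H$.
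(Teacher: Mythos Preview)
Your proof is correct and follows the same strategy as the paper: reduce to Fact~\ref{criterion} on $\mathcal{C}/H$ by showing that (a')--(d') are equivalent to (a)--(c) for $(\overline{G_1},\overline{G_2},\overline{P_1},\overline{P_2})$ together with $\overline{P_1}\ne\overline{P_2}$. The only cosmetic difference is in the direction (c')$\Rightarrow$(c): the paper applies the pushforward $(f_H)_*$ and cancels the resulting factor $|H|$, whereas you use the pullback $f_H^*$ both ways and invoke its injectivity on divisors---either route works.
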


\begin{remark}
A similar result holds for ``outer'' Galois points. 
In this case, we consider a $4$-tuple $(G_1, G_2, H, Q)$ with $Q \in \mathcal{C}$ such that $G_1$ and $G_2$ satisfy conditions (a') and (b'), and $\sum_{\sigma \in G_1}\sigma(Q)=\sum_{\tau \in G_2}\tau (Q)$ holds (see also \cite[Remark 1]{fukasawa1}). 
\end{remark}

As an application, for the case where $\mathcal{C}$ has a birational embedding with two Galois points, the following holds. 

\begin{corollary} \label{maincor} 
Let a $4$-tuple $(G_1, G_2, P_1, P_2)$ satisfies conditions {\rm (a)}, {\rm (b)} and {\rm (c)} in Fact \ref{criterion}, and let $H$ be a finite subgroup of ${\rm Aut}(\mathcal{C})$. 
If three conditions 
\begin{itemize}
\item[(d)] $H \cap G_1G_2=\{1\}$,  
\item[(e)] $HG_1=H \rtimes G_1$ and $HG_2=H \rtimes G_2$, and 
\item[(f)] $HP_1 \ne HP_2$
\end{itemize}
are satisfied, then there exists a birational embedding $\psi: \mathcal{C}/H \rightarrow \mathbb P^2$ of degree $|G_1|+1$ such that $\psi(\overline{P_1})$ and $\psi(\overline{P_2})$ are different inner Galois points for $\psi(\mathcal{C}/H)$ and $G_{\psi(\overline{P_i})} \cong G_i$ for $i=1, 2$. 
\end{corollary}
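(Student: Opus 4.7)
The plan is to reduce Corollary \ref{maincor} to Theorem \ref{main} by choosing the larger groups in Theorem \ref{main} to be $G_i' := HG_i$ for $i=1, 2$, and then verifying that the four hypotheses (a'), (b'), (c'), (d') hold for the $4$-tuple $(G_1', G_2', P_1, P_2)$ together with the normal subgroup $H$.

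First, I would check the group-theoretic hypotheses. By (e), $H \vartriangleleft G_i' = H \rtimes G_i$ and in particular $H \cap G_i = \{1\}$, so $\overline{G_i'} = G_i'/H \cong G_i$ and $|G_1'/H| = |G_1|$. For (a'), since $G_i \subset G_i'$ we have a surjection $\mathcal{C}/G_i \twoheadrightarrow \mathcal{C}/G_i'$, so $\mathcal{C}/G_i' \cong \mathbb{P}^1$ because it is a quotient of $\mathbb{P}^1$. For (b'), suppose $h_1\sigma_1 = h_2\sigma_2$ with $h_j \in H$ and $\sigma_j \in G_j$. Then $\sigma_1\sigma_2^{-1} = h_1^{-1}h_2 \in H \cap G_1G_2 = \{1\}$ by (d), hence $\sigma_1 = \sigma_2 \in G_1 \cap G_2 = \{1\}$ by (b), forcing $h_1\sigma_1 = h_1 \in H$; thus $G_1' \cap G_2' = H$. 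Condition (d') is just (f).

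The verification of (c') is the content of the reduction. Apply the divisor operator $\sum_{h \in H} h^*(\,\cdot\,)$ to both sides of (c); this is legitimate because (c) is an equality of effective divisors. Using the fact that (e) gives a bijection $H \times G_i \to G_i'$ via $(h, \sigma) \mapsto h\sigma$, one obtains
\begin{equation*}
\sum_{h \in H}\sum_{\sigma \in G_1} h\sigma(P_2) = \sum_{\pi \in G_1'} \pi(P_2),
\qquad
\sum_{h \in H}\sum_{\tau \in G_2} h\tau(P_1) = \sum_{\pi \in G_2'} \pi(P_1),
\end{equation*}
and the resulting identity is exactly (c') for the tuple $(G_1', G_2', P_1, P_2)$.

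Once (a')--(d') are established, Theorem \ref{main} produces a birational embedding $\psi: \mathcal{C}/H \to \mathbb{P}^2$ of degree $|G_1'/H|+1 = |G_1|+1$ with $\psi(\overline{P_1})$, $\psi(\overline{P_2})$ different inner Galois points whose associated Galois groups are $\overline{G_i'} \cong G_i$, proving the corollary. The only genuinely subtle step is the verification of (b'), where conditions (b), (d) and the semidirect-product structure (e) must be combined carefully; the other steps are essentially bookkeeping with divisors and subgroup inclusions.
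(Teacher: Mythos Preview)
Your proposal is correct and follows essentially the same approach as the paper: set $\hat{G}_i = HG_i$, verify (a')--(d') of Theorem~\ref{main} for $(\hat{G}_1,\hat{G}_2,H,P_1,P_2)$, and invoke that theorem. Your arguments for (b') and (c') match the paper's almost verbatim; for (a') the paper phrases the same idea as $k(\mathcal{C})^{\hat{G}_i}\subset k(\mathcal{C})^{G_i}$ together with L\"uroth's theorem (note that your phrase ``quotient of $\mathbb{P}^1$'' is slightly imprecise since $G_i$ need not be normal in $\hat{G}_i$, but the underlying L\"uroth argument is the right one).
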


In Sections 3 and 4, we will apply Corollary \ref{maincor} to the Giulietti--Korchm\'{a}ros curve, and the curves constructed by Skabelund. 
Theorems \ref{gk}, \ref{skabelund1}, \ref{skabelund2} and \ref{skabelund3} provide new examples of plane curves with two Galois points (see the Table in \cite{open}). 
In Section 5, we discuss the relations between Corollary \ref{maincor} and the previous works. 

\section{Proof of the main theorem} 

\begin{proof}[Proof of Theorem \ref{main}]
We consider the only-if part. 
By condition (d'), $\overline{P_1} \ne \overline{P_2}$. 
We would like to prove that conditions (a), (b) and (c) in Fact \ref{criterion} are satisfied for the $4$-tuple $(\overline{G_1}, \overline{G_2}, \overline{P_1}, \overline{P_2})$. 
Since $k(\mathcal{C}/H)^{\overline{G_i}}=k(\mathcal{C})^{G_i}$, by condition (a'), the fixed field $k(\mathcal{C}/H)^{\overline{G_i}}$ is rational. 
It follows from condition (b') that $\overline{G_1}\cap\overline{G_2}=\{1\}$. 
Therefore, conditions (a) and (b) for the $4$-tuple $(\overline{G_1}, \overline{G_2}, \overline{P_1}, \overline{P_2})$ are satisfied. 
Since
$$ \sum_{\sigma \in G_1}\sigma(P_2)=\sum_{H\sigma \in G_1/H}\sum_{h \in H}h\sigma(P_2), $$
it follows that
$$ (f_H)_*\left(\sum_{\sigma \in G_1}\sigma(P_2)\right)=\sum_{H\sigma \in G_1/H}|H|\cdot\overline{\sigma(P_2)}=|H|\sum_{\overline{\sigma} \in \overline{G_1}}\overline{\sigma}(\overline{P_2}). $$
On the other hand, 
$(f_H)_*(\sum_{h \in H}h(P_1))=|H|\overline{P_1}$. 
It follows from condition (c') that 
$$|H|\left(\overline{P_1}+\sum_{\overline{\sigma} \in \overline{G_1}}\overline{\sigma}(\overline{P_2})\right)=|H|\left(\overline{P_2}+\sum_{\overline{\tau} \in \overline{G_2}}\overline{\tau}(\overline{P_1})\right). $$
Since $|H|\cdot D=0$ implies $D=0$ for any divisor $D$, we are able to cut the multiplier $|H|$. 
Condition (c) for the $4$-tuple $(\overline{G_1}, \overline{G_2}, \overline{P_1}, \overline{P_2})$ is satisfied. 

We consider the if part. 
We assume that conditions (a), (b) and (c) for the $4$-tuple $(\overline{G_1}, \overline{G_2}, \overline{P_1}, \overline{P_2})$ are satisfied.   
Since $k(\mathcal{C})^{G_i}=k(\mathcal{C}/H)^{\overline{G_i}}$, by condition (a), the fixed field $k(\mathcal{C})^{G_i}$ is rational. 
Condition (a') is satisfied. 
Since $\overline{G_1} \cap \overline{G_2}=\{1\}$, condition (b') is satisfied. 
Since $\varphi(\overline{P_1}) \ne \varphi(\overline{P_2})$, condition (d') is satisfied. 
By condition (c), 
$$\overline{P_1}+\sum_{\overline{\sigma} \in \overline{G_1}}\overline{\sigma}(\overline{P_2})=\overline{P_2}+\sum_{\overline{\tau} \in \overline{G_2}}\overline{\tau}(\overline{P_1}). $$
Then, 
\begin{eqnarray*} 
f_H^*\left(\overline{P_1}+\sum_{\overline{\sigma} \in \overline{G_1}}\overline{\sigma}(\overline{P_2})\right)&=&f_H^*(\overline{P_1})+\sum_{\overline{\sigma} \in \overline{G_1}}f_H^*(\overline{\sigma}(\overline{P_2})) \\
&=&\sum_{h \in H}h(P_1)+\sum_{H\sigma \in G_1/H}\sum_{h \in H}h\sigma(P_2) \\
&=&\sum_{h \in H}h(P_1)+\sum_{\sigma \in G_1}\sigma(P_2). 
\end{eqnarray*}
Similarly, 
$$ f_H^*\left(\overline{P_2}+\sum_{\overline{\tau} \in \overline{G_2}}\overline{\tau}(\overline{P_1})\right)=\sum_{h \in H}h(P_2)+\sum_{\tau \in G_2}\tau(P_1).$$
Condition (c') is satisfied. 
\end{proof}

\begin{proof}[Proof of Corollary \ref{maincor}]
By condition (d), $H \cap G_{i}=\{1\}$ for $i=1, 2$. 
By condition (e), $HG_i=H \rtimes G_i$. 
Let $\hat{G_i}=H \rtimes G_i$ for $i=1, 2$. 
Note that $H \triangleleft \hat{G_i}$ for $i=1, 2$. 
We would like to prove that conditions (a'), (b'), (c') and (d') are satisfied for the $5$-tuple $(\hat{G_1}, \hat{G_2}, H, P_1, P_2)$. 
Condition (f) is the same as condition (d'). 

Since $k(\mathcal{C})^{\hat{G_i}} \subset k(\mathcal{C})^{G_i}$, by condition (a) and L\"{u}roth's theorem, it follows that $\mathcal{C}/\hat{G_i} \cong \mathbb{P}^1$. 
Condition (a') is satisfied.  

Let $\eta \in \hat{G_1} \cap \hat{G_2}$. 
Then, there exist $h_1, h_2 \in H$, $\sigma \in G_1$ and $\tau \in G_2$ such that $\eta=h_1\sigma=h_2\tau$. 
Then, $\sigma\tau^{-1}=h_1^{-1}h_2 \in H$. 
By condition (d), $\sigma\tau^{-1}=1$ and hence, $\sigma=\tau \in G_1 \cap G_2$. 
By condition (b), $\sigma=\tau=1$. 
This implies that $\eta \in H$. 
It follows that $\hat{G_1} \cap \hat{G_2}=H$. 
Condition (b') is satisfied. 
 
By condition (c), it follows that
$$ P_1+\sum_{\sigma \in G_1}\sigma(P_2)=P_2+\sum_{\tau \in G_2}\tau(P_1). $$
For each $h \in H$, 
$$ h(P_1)+\sum_{\sigma \in G_1}h\sigma(P_2)=h(P_2)+\sum_{\tau \in G_2}h\tau(P_1). $$
Therefore, 
$$ \sum_{h \in H}h(P_1)+\sum_{h \in H}\sum_{\sigma \in G_1}h\sigma(P_2)=\sum_{h \in H}h(P_2)+\sum_{h \in H}\sum_{\tau \in G_2}h\tau(P_1). $$
Condition (c') is satisfied, since each element of $\hat{G_1}$ (resp. of $\hat{G_2}$) is represented as $h\sigma$ (resp. $h\tau$) for some $h \in H$ and $\sigma \in G_1$ (resp. $\tau \in G_2$). 
\end{proof}

\section{An application to cyclic subcovers of the Giulietti--Korchm\'{a}ros curve} 
Let $p>0$ and let $q$ be a power of $p$. 
We consider the Giulietti--Korchm\'{a}ros curve $\mathcal{X} \subset \mathbb{P}^3$, which is defined by 
$$ x^q+x-y^{q+1}=0 \ \mbox{ and } \ y((x^q+x)^{q-1}-1)-z^{q^2-q+1}=0 $$
(see \cite{giulietti-korchmaros}). 
The group
$$G_1:=\left\{
\left(\begin{array}{cccc}
1 & b^q &  0 &a \\
0 & 1 &  0 &b \\
0 & 0 & 1 & 0 \\
0 & 0 & 0 & 1
\end{array}\right)
\ | \ a, b \in \mathbb{F}_{q^2}, \ a^q+a-b^{q+1}=0\right\}
\subset {\rm PGL}(4, k)$$ 
of order $q^3$ acts on $\mathcal{X}$ and on the set $\mathcal{X} \cap \{Z=0\}=\mathcal{X}(\mathbb{F}_{q^2})$ of all $\mathbb{F}_{q^2}$-rational points of $\mathcal{X}$, and this group fixes a point $P_1:=(1:0:0:0) \in \mathcal{X}$ (see \cite{giulietti-korchmaros}). 
Let 
$$\xi(x, y, z)=\left(\frac{1}{x}, -\frac{y}{x}, \frac{z}{x}\right). $$
Then, $\xi$ acts on $\mathcal{X}$ and on $\mathcal{X}(\mathbb{F}_{q^2})$, and $P_2:=\xi(P_1)=(0:0:0:1)$. 
Let $G_2:=\xi G_1 \xi^{-1}$, which fixes $P_2$. 
According to \cite{fukasawa-higashine2}, the $4$-tuple $(G_1, G_2, P_1, P_2)$ satisfies conditions (a), (b) and (c). 

It is not difficult to check that the cyclic group
$$ C_{q^2-q+1}:=\left\{(x, y, z) \mapsto (x, y, \zeta z) \ | \ \zeta^{q^2-q+1}=1 \right\}$$
acts on $\mathcal{X}$. 
We prove the following. 

\begin{theorem} \label{gk}
Let $H$ be a subgroup of $C_{q^2-q+1}$. 
Then, there exists a birational embedding $\psi: \mathcal{X}/H \rightarrow \mathbb{P}^2$ of degree $q^3+1$ with two inner Galois points.
\end{theorem}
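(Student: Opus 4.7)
The plan is to apply Corollary \ref{maincor} to the $4$-tuple $(G_1, G_2, P_1, P_2)$ together with the subgroup $H \subset C_{q^2-q+1}$. Conditions (a), (b), and (c) of Fact \ref{criterion} for this $4$-tuple are already established in \cite{fukasawa-higashine2}, so the task reduces to verifying the three additional conditions (d), (e), and (f) of Corollary \ref{maincor}; the resulting degree $|G_1|+1 = q^3+1$ will then match the claim.

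Condition (f) is immediate: every element of $C_{q^2-q+1}$ sends $(X:Y:Z:W)$ to $(X:Y:\zeta Z:W)$, hence fixes both $P_1 = (1:0:0:0)$ and $P_2 = (0:0:0:1)$, so $HP_1 = \{P_1\} \ne \{P_2\} = HP_2$. For condition (e), I would first observe that every matrix $M_{a,b} \in G_1$ has third row $(0,0,1,0)$ and third column $(0,0,1,0)^T$, so a direct multiplication with $N_\zeta = \mathrm{diag}(1,1,\zeta,1)$ shows that $M_{a,b}$ and $N_\zeta$ commute. A short computation also yields $\xi N_\zeta \xi^{-1} = N_\zeta$ (using $\xi = \xi^{-1}$), so $\xi$ centralizes $H$ and therefore $G_2 = \xi G_1 \xi^{-1}$ also commutes with $H$ elementwise. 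Since the $(3,3)$-entry of $M_{a,b}$ equals $1$, the equation $M_{a,b} = N_\zeta$ forces $\zeta = 1$ and then $a = b = 0$, so $H \cap G_i = \{1\}$; combined with elementwise commutativity, this gives $HG_i = H \times G_i = H \rtimes G_i$ for $i = 1, 2$.

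For condition (d), I would argue by tracking the point $P_1$. Suppose $g_1 g_2 = h$ with $g_1 \in G_1$, $g_2 \in G_2$, and $h \in H$. Because $g_1$ and $h$ both fix $P_1$, necessarily $g_2(P_1) = P_1$. Writing $g_2 = \xi M_{a',b'} \xi^{-1}$ and applying it to $P_1$ step by step gives $\xi(P_1) = P_2$, then $M_{a',b'}(P_2) = (a':b':0:1)$, and finally $\xi(a':b':0:1) = (1:-b':0:a')$; equality with $(1:0:0:0)$ forces $a' = b' = 0$, so $g_2 = 1$ and $h = g_1 \in H \cap G_1 = \{1\}$. I expect this last verification to be the only mildly delicate step of the argument; once it is in hand, Corollary \ref{maincor} delivers the desired birational embedding $\psi \colon \mathcal{X}/H \to \mathbb{P}^2$ of degree $q^3 + 1$ with two different inner Galois points $\psi(\overline{P_1})$ and $\psi(\overline{P_2})$.
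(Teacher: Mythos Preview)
Your proposal is correct and follows essentially the same strategy as the paper: apply Corollary~\ref{maincor} to the $4$-tuple $(G_1,G_2,P_1,P_2)$ together with $H$, reducing the task to verifying conditions (d), (e), (f). The only minor difference is in the verification of (d): the paper argues conceptually that every nonidentity element of $G_1G_2$ moves some point of $\mathcal{X}(\mathbb{F}_{q^2})$ while $H$ fixes them all, whereas you carry out an explicit computation tracking the image of $P_1$ under $g_2$; both arguments are valid and of comparable length.
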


\begin{proof}
Note that $H$ fixes all points of $\mathcal{X}(\mathbb{F}_{q^2})$ ($=\mathcal{X} \cap \{Z=0\}$). 
Therefore, $HP_1=\{P_1\} \ne \{P_2\}=HP_2$. 
Since each element of $G_1G_2 \setminus \{1\}$ does not fix some point in $\mathcal{X}(\mathbb{F}_{q^2})$, $H \cap G_1G_2=\{1\}$ follows.  
It is easily verified that $HG_1=H \times G_1$. 
Since $\xi h=h \xi$ for each element $h \in H$, $HG_2=H \times G_2$ follows. 
Therefore, the $5$-tuple $(G_1, G_2, P_1, P_2, H)$ satisfies conditions (d), (e) and (f). 
By Corollary \ref{maincor}, the assertion holds. 
\end{proof}  

\section{The curves constructed by Skabelund and their quotient curves}

We consider the cyclic cover $\Tilde{\mathcal{S}}$ of the Suzuki curve $\mathcal{S}$, constructed by Skabelund (\cite{skabelund}). 
Let $p=2$, let $q_0$ be a power of $2$, and let $q=2q_0^2$. 
The curve $\Tilde{\mathcal{S}}$ is the smooth model of the curve defined by
$$ y^q+y=x^{q_0}(x^q+x) \ \mbox{ and } \ x^q+x=z^{q-2q_0+1} $$
in $\mathbb{P}^3$. 
Let $P_1 \in \Tilde{\mathcal{S}}$ be the pole of $x$. 
It is known that the group
$$G_1:=\left\{
\left(\begin{array}{cccc}
1 & 0 &  0 &a \\
a^{q_0} & 1 &  0 &b \\
0 & 0 & 1 & 0 \\
0 & 0 & 0 & 1
\end{array}\right)
\ | \ a, b \in \mathbb{F}_{q}\right\}
\subset {\rm PGL}(4, k)$$ 
of order $q^2$ acts on $\Tilde{\mathcal{S}}$ and on the set $\Tilde{\mathcal{S}}(\mathbb{F}_{q})$ of all $\mathbb{F}_{q}$-rational points of $\Tilde{\mathcal{S}}$, and this group fixes $P_1$ (see \cite{gmqz, skabelund}). 
Let $\alpha:=y^{2q_0}+x^{2q_0+1}$, $\beta:=xy^{2q_0}+\alpha^{2q_0}$ and let 
$$\xi(x, y, z)=\left(\frac{\alpha}{\beta}, \frac{y}{\beta}, \frac{z}{\beta}\right). $$
Then, $\xi$ acts on $\Tilde{\mathcal{S}}$ and on $\Tilde{\mathcal{S}}(\mathbb{F}_{q})$, and $P_2:=\xi(P_1)=(0:0:0:1)$. 
Let $G_2:=\xi G_1 \xi^{-1}$, which fixes $P_2$. 
Then, we have the following. 

\begin{theorem} \label{skabelund1}
The $4$-tuple $(G_1, G_2, P_1, P_2)$ satisfies conditions {\rm (a)}, {\rm (b)} and {\rm (c)}, that is, the curve $\Tilde{\mathcal{S}}$ has a plane model of degree $q^2+1$ with two inner Galois points. 
\end{theorem}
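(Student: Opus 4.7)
The plan is to verify each of conditions (a), (b), (c) of Fact~\ref{criterion} for the $4$-tuple $(G_1, G_2, P_1, P_2)$, transferring from $G_1$ to $G_2$ via the conjugacy $G_2=\xi G_1 \xi^{-1}$.

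For (a), each matrix in $G_1$ fixes the third and fourth homogeneous coordinates, so $z\in k(\tilde{\mathcal{S}})$ is $G_1$-invariant. The defining equations produce a tower $k(z)\subset k(z,x)\subset k(z,x,y)=k(\tilde{\mathcal{S}})$ of two Artin--Schreier-type extensions, each of degree $q$ (irreducibility of $x^q+x-z^{q-2q_0+1}$ over $k(z)$ follows from the pole-order criterion since $q-2q_0+1$ is coprime to $2$, and then $y^q+y-x^{q_0}z^{q-2q_0+1}$ is similarly irreducible over $k(z,x)$). Thus $[k(\tilde{\mathcal{S}}):k(z)]=q^2=|G_1|$, forcing $k(z)=k(\tilde{\mathcal{S}})^{G_1}$ and hence $\tilde{\mathcal{S}}/G_1\cong\mathbb{P}^1$; conjugation by $\xi$ yields $\tilde{\mathcal{S}}/G_2\cong\mathbb{P}^1$.

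Conditions (b) and (c) both follow from the orbit structure of $G_1$ on $\tilde{\mathcal{S}}(\mathbb{F}_q)$. I would use that the cyclic cover $\tilde{\mathcal{S}}\to\mathcal{S}$ of the Suzuki curve is totally ramified over $\mathcal{S}(\mathbb{F}_q)$, so $|\tilde{\mathcal{S}}(\mathbb{F}_q)|=q^2+1$ and the $G_1$-action lifts the classical action of the order-$q^2$ Sylow subgroup of the Suzuki group, which is regular on $\mathcal{S}(\mathbb{F}_q)\setminus\{P_1\}$ (see \cite{skabelund, gmqz}). Hence $G_1$ acts regularly on $\tilde{\mathcal{S}}(\mathbb{F}_q)\setminus\{P_1\}$; since $\alpha$ and $\beta$ have coefficients in $\mathbb{F}_2$, the automorphism $\xi$ is defined over $\mathbb{F}_q$, and conjugation yields the analogous regular action of $G_2$ on $\tilde{\mathcal{S}}(\mathbb{F}_q)\setminus\{P_2\}$. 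In particular the $G_1$-stabilizer of $P_2\ne P_1$ is trivial, so $G_1\cap G_2\subseteq\mathrm{Stab}_{G_1}(P_2)=\{1\}$, giving (b). For (c), the regular orbit of $P_2$ under $G_1$ is exactly $\tilde{\mathcal{S}}(\mathbb{F}_q)\setminus\{P_1\}$, and symmetrically for the $G_2$-orbit of $P_1$, so both sides equal the divisor $\sum_{Q\in\tilde{\mathcal{S}}(\mathbb{F}_q)}Q$.

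The main obstacle is establishing this regular orbit structure, which rests on two features of the Skabelund construction: the total ramification of $\tilde{\mathcal{S}}\to\mathcal{S}$ over $\mathcal{S}(\mathbb{F}_q)$ (so that $\mathbb{F}_q$-rational points lift uniquely), and the compatibility of the matrix presentation of $G_1$ with the Sylow $2$-subgroup action on the Suzuki curve. Once these ingredients, together with the $\mathbb{F}_q$-rationality of $\xi$, are in place, conditions (a), (b), (c) follow by the divisor bookkeeping sketched above.
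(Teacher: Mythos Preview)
Your proposal is correct and follows essentially the same approach as the paper's proof: for (a) you identify $k(\tilde{\mathcal{S}})^{G_1}=k(z)$, for (b) you use that $G_2$ fixes $P_2$ while only the identity in $G_1$ does, and for (c) you show both divisors equal $\sum_{Q\in\tilde{\mathcal{S}}(\mathbb{F}_q)}Q$. The only difference is that you spell out the degree-$q^2$ tower argument for (a) and the regular orbit structure on $\tilde{\mathcal{S}}(\mathbb{F}_q)$ for (b) and (c), whereas the paper states these points without elaboration.
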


\begin{proof}
It is not difficult to check that $k(\Tilde{\mathcal{S}})^{G_1}=k(z)$. 
Since $G_1$ does not fix $P_2$, $G_1 \cap G_2=\{1\}$. 
Conditions (a) and (b) are satisfied.
Condition (c) is satisfied, since 
$$P_1+\sum_{\sigma \in G_1}\sigma(P_2)=\sum_{Q \in \Tilde{\mathcal{S}}(\mathbb{F}_q)}Q=P_2+\sum_{\tau \in G_2}\tau(P_1).  $$ 
\end{proof}

It is not difficult to check that the cyclic group
$$ C_{q-2q_0+1}:=\left\{(x, y, z) \mapsto (x, y, \zeta z) \ | \ \zeta^{q-2q_0+1}=1 \right\}$$
acts on $\mathcal{X}$. 
Similar to the proof of Theorem \ref{gk}, the following holds. 

\begin{theorem} \label{skabelund2}
Let $H$ be a subgroup of $C_{q-2q_0+1}$. 
Then, there exists a birational embedding $\psi: \Tilde{\mathcal{S}}/H \rightarrow \mathbb{P}^2$ of degree $q^2+1$ with two inner Galois points.
\end{theorem}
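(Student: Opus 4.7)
The plan is to mirror the proof of Theorem \ref{gk} by verifying that the $5$-tuple $(G_1, G_2, P_1, P_2, H)$ of Theorem \ref{skabelund1} satisfies conditions (d), (e) and (f) of Corollary \ref{maincor}, after which that corollary directly yields the desired plane model.

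First, I would observe that $H$ acts trivially on $\Tilde{\mathcal{S}}(\mathbb{F}_q)$: the group $H$ only rescales the $z$-coordinate, while for an affine $\mathbb{F}_q$-rational point we have $x \in \mathbb{F}_q$, so $x^q+x=0$ in characteristic $2$, and the defining relation $x^q+x=z^{q-2q_0+1}$ forces $z=0$; the points $P_1$ (the pole of $x$) and $P_2=(0:0:0:1)$ are likewise fixed by direct inspection of their coordinates. Since $P_1$ and $P_2$ are distinct points of $\Tilde{\mathcal{S}}(\mathbb{F}_q)$, condition (f) $HP_1 \ne HP_2$ is immediate. For condition (d), I would argue as in Theorem \ref{gk}: every non-identity element of $G_1G_2$ moves some point of $\Tilde{\mathcal{S}}(\mathbb{F}_q)$ and hence cannot belong to $H$. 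Concretely, the identity $P_1+\sum_{\sigma \in G_1}\sigma(P_2)=\sum_{Q \in \Tilde{\mathcal{S}}(\mathbb{F}_q)}Q$ used in the proof of Theorem \ref{skabelund1} shows that $G_1$ acts simply transitively on $\Tilde{\mathcal{S}}(\mathbb{F}_q)\setminus\{P_1\}$, and analogously for $G_2$ on $\Tilde{\mathcal{S}}(\mathbb{F}_q)\setminus\{P_2\}$; so if a product $\sigma\tau$ with $\sigma \in G_1$, $\tau \in G_2$ lies in $H$, applying it to $P_2$ forces $\sigma=1$, and then applying it to $P_1$ forces $\tau=1$.

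For condition (e), the equality $HG_1=H \times G_1$ is transparent from the matrix presentation of $G_1$, whose elements act on affine coordinates by $(x,y,z) \mapsto (x+a,\, a^{q_0}x+y+b,\, z)$ and hence leave $z$ unchanged, so they commute with every $h \in H$. To obtain $HG_2=H \times G_2$, it suffices to verify $\xi h=h \xi$ for each $h \in H$: since $\alpha=y^{2q_0}+x^{2q_0+1}$ and $\beta=xy^{2q_0}+\alpha^{2q_0}$ do not involve $z$, both $\xi h$ and $h \xi$ send $(x,y,z)$ to $(\alpha/\beta,\, y/\beta,\, \zeta z/\beta)$. Once (d), (e), (f) are in hand, Corollary \ref{maincor} applied to the $4$-tuple of Theorem \ref{skabelund1} produces the birational embedding $\psi: \Tilde{\mathcal{S}}/H \rightarrow \mathbb{P}^2$ of degree $q^2+1$ with two inner Galois points. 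The step I expect to require the most care is (d), but thanks to the simple transitivity already extracted from Theorem \ref{skabelund1} no genuinely new difficulty should appear beyond what was handled in the Giulietti--Korchm\'aros case.
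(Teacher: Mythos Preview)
Your proposal is correct and follows exactly the approach the paper intends: the paper's own proof is simply the remark ``Similar to the proof of Theorem \ref{gk}, the following holds,'' and your argument faithfully transports that proof, verifying (d), (e), (f) for the $5$-tuple $(G_1,G_2,P_1,P_2,H)$ of Theorem \ref{skabelund1} and then invoking Corollary \ref{maincor}. Your treatment of (d) via the simple transitivity extracted from $P_1+\sum_{\sigma\in G_1}\sigma(P_2)=\sum_{Q\in\Tilde{\mathcal{S}}(\mathbb{F}_q)}Q$ is a clean unpacking of the clause ``each element of $G_1G_2\setminus\{1\}$ does not fix some point in $\Tilde{\mathcal{S}}(\mathbb{F}_q)$'' implicit in the analogy with Theorem \ref{gk}.
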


We consider the cyclic cover $\Tilde{\mathcal{R}}$ of the Ree curve $\mathcal{R}$, constructed by Skabelund. 
Let $p=3$, let $q_0$ be a power of $3$ and let $q=3q_0^2$. 
The curve $\Tilde{\mathcal{R}}$ is the smooth model of the curve defined by 
$$ y^q-y=x^{q_0}(x^q-x), \ z^q-z=x^{2q_0}(x^q-x) \ \mbox{ and } \ x^q-x=t^{q-3q_0+1}. $$
Let 
$$ C_{q-3q_0+1}:=\left\{(x, y, z) \mapsto (x, y, \zeta z) \ | \ \zeta^{q-3q_0+1}=1 \right\}. $$
Similar to Theorems \ref{skabelund1} and \ref{skabelund2}, the following result holds. 

\begin{theorem} \label{skabelund3} 
Let $H$ be a subgroup of $C_{q-3q_0+1}$. 
Then, the curves $\Tilde{\mathcal{R}}$ and $\Tilde{\mathcal{R}}/H$ have plane models of degree $q^3+1$ with two inner Galois points. 
\end{theorem}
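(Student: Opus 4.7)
The plan is to mirror the strategy of Theorems \ref{skabelund1} and \ref{skabelund2}: first I verify conditions (a), (b), (c) of Fact \ref{criterion} for $\Tilde{\mathcal{R}}$ itself (the case $H=\{1\}$), and then apply Corollary \ref{maincor} with an arbitrary $H\subset C_{q-3q_0+1}$ to handle $\Tilde{\mathcal{R}}/H$.

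For the first step I would let $P_1\in\Tilde{\mathcal{R}}$ be the common pole of $x,y,z$, take $G_1\subset\mathrm{Aut}(\Tilde{\mathcal{R}})$ to be the Sylow $3$-subgroup of order $q^3$ of the stabilizer of $P_1$ in the lifted Ree group, and build a birational involution $\xi$ of $\Tilde{\mathcal{R}}$ from the standard Ree involution, written as a rational function in $x,y,z$ via suitable Ree polynomials. Setting $P_2:=\xi(P_1)$ and $G_2:=\xi G_1\xi^{-1}$, the construction is to ensure $P_2\ne P_1$ and that $\xi$ preserves $\Tilde{\mathcal{R}}(\mathbb{F}_q)$. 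Condition (a), namely $k(\Tilde{\mathcal{R}})^{G_1}=k(t)$, follows because $G_1$ acts by Ree-type matrices leaving $t$ fixed and $[k(\Tilde{\mathcal{R}}):k(t)]=q^3=|G_1|$ from the defining equations. Condition (b), $G_1\cap G_2=\{1\}$, holds because a nontrivial element of $G_1$ fixes only $P_1$ among the $q^3+1$ $\mathbb{F}_q$-rational points while $G_2$ fixes $P_2$. Condition (c) reduces to the divisor identity
\[ P_1+\sum_{\sigma\in G_1}\sigma(P_2)=\sum_{Q\in\Tilde{\mathcal{R}}(\mathbb{F}_q)}Q=P_2+\sum_{\tau\in G_2}\tau(P_1), \]
which follows from the simply transitive action of $G_1$ on $\Tilde{\mathcal{R}}(\mathbb{F}_q)\setminus\{P_1\}$ (and similarly for $G_2$). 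Fact \ref{criterion} then provides the asserted plane model of $\Tilde{\mathcal{R}}$ of degree $q^3+1$ with two inner Galois points.

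For the quotient I would verify (d), (e), (f) of Corollary \ref{maincor}. Every $\mathbb{F}_q$-rational point of $\Tilde{\mathcal{R}}$ lies over an $\mathbb{F}_q$-rational point of $\mathcal{R}$, forcing $x^q-x=0$ and hence $t=0$; since $H\subset C_{q-3q_0+1}$ acts by $t\mapsto\zeta t$ and fixes $x,y,z$, every $\mathbb{F}_q$-rational point of $\Tilde{\mathcal{R}}$ is fixed by $H$. This gives $HP_1=\{P_1\}\ne\{P_2\}=HP_2$, which is (f), and shows that every nontrivial element of $G_1G_2$ moves some $\mathbb{F}_q$-rational point while $H$ fixes all of them, yielding $H\cap G_1G_2=\{1\}$, which is (d). Each element of $H$ commutes with every element of $G_1$ (they act on disjoint coordinates) and with $\xi$ (a direct computation from the explicit formula for $\xi$), so $HG_i=H\times G_i$, which is (e). Corollary \ref{maincor} then produces a plane model of $\Tilde{\mathcal{R}}/H$ of degree $q^3+1$ with two inner Galois points.

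The main obstacle I expect is the explicit construction and verification of $\xi$ in the Ree setting: the analogue of the Suzuki formula $\xi(x,y,z)=(\alpha/\beta,y/\beta,z/\beta)$ requires several auxiliary polynomials in $x,y,z$, and one must check that the resulting rational map actually preserves all three defining equations of $\Tilde{\mathcal{R}}$ and sends $P_1$ to the designated $\mathbb{F}_q$-rational point with $x=y=z=0$. Once $\xi$ is in hand, the remaining arguments are purely group-theoretic and divisor-theoretic and reproduce the Suzuki template verbatim.
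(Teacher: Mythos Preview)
Your plan is correct and is precisely the approach the paper intends: it states only ``Similar to Theorems \ref{skabelund1} and \ref{skabelund2}'' and gives no further details, so your two-step outline (verify (a)--(c) for $\Tilde{\mathcal{R}}$ via $G_1$ of order $q^3$, an involution $\xi$, and the identity $P_1+\sum_{\sigma\in G_1}\sigma(P_2)=\sum_{Q\in\Tilde{\mathcal{R}}(\mathbb{F}_q)}Q$, then check (d)--(f) for $H\subset C_{q-3q_0+1}$ and invoke Corollary \ref{maincor}) is exactly the implied argument, and you have in fact supplied more detail than the paper does. Your identification of the explicit construction of $\xi$ as the only nontrivial point is accurate; note also that the paper's displayed formula for $C_{q-3q_0+1}$ appears to be a copy-paste slip from the Suzuki case, and your version with the action on $t$ is the correct one.
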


\section{Relations with the previous works} 
We can provide another proof of Theorems 1 and 2 in \cite{fukasawa-higashine1}, by Corollary \ref{maincor} and the analysis of the Hermitian curve $\mathcal{H} \subset \mathbb{P}^2$: $X^qZ+XZ^q=Y^{q+1}$. 
We recover Theorem 1(1) in \cite{fukasawa-higashine1} here. 
Precisely: 

\begin{fact}
Let a positive integer $m$ divide $q+1$. 
Then, the smooth model of the curve $y^m=x^q+x$ possesses a birational embedding into $\mathbb{P}^2$ of degree $q+1$ with two inner Galois points. 
\end{fact}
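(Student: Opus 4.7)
The plan is to apply Corollary \ref{maincor} with $\mathcal{C} = \mathcal{H}$ the Hermitian curve $X^qZ + XZ^q = Y^{q+1}$ and $H$ a cyclic subgroup of automorphisms acting on the $Y$-coordinate. Explicitly, setting $n := (q+1)/m$ and fixing a primitive $n$-th root of unity $\omega$, I would take
$$ H = \{(X:Y:Z) \mapsto (X:\omega^i Y:Z) \mid 0 \le i < n\}, $$
a cyclic group of order $n$ coprime to $p$. In the affine chart $Z = 1$, the function $u := y^n$ is $H$-invariant and satisfies $u^m = y^{q+1} = x^q + x$, so the quotient $\mathcal{H}/H$ is the smooth model of $y^m = x^q + x$, which is the curve whose plane model we wish to produce.

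Next I would take the two standard inner Galois points $P_1 = (1:0:0)$ and $P_2 = (0:0:1)$ of $\mathcal{H}$, with Galois groups
$$ G_1 = \{(X:Y:Z) \mapsto (X + \lambda Z : Y : Z) \mid \lambda^q + \lambda = 0\} $$
and $G_2 = \xi G_1 \xi^{-1}$, where $\xi:(X:Y:Z) \mapsto (Z:Y:X)$ is the involution swapping $P_1$ and $P_2$; both have order $q$. That the $4$-tuple $(G_1, G_2, P_1, P_2)$ satisfies conditions (a), (b), (c) of Fact \ref{criterion} is classical for the Hermitian curve: (a) holds because $\mathcal{H}/G_i \cong \mathbb{P}^1$ via $y$ (resp.\ $\xi^*y$); (b) follows since only the identity of $G_1$ fixes $P_2$; and (c) reduces to the observation that both $P_1 + \sum_{\sigma \in G_1}\sigma(P_2)$ and $P_2 + \sum_{\tau \in G_2}\tau(P_1)$ coincide with the intersection divisor of the line $Y = 0$ with $\mathcal{H}$.

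The main task is to verify conditions (d), (e), (f) of Corollary \ref{maincor} for the $5$-tuple $(G_1, G_2, P_1, P_2, H)$. Condition (f) is immediate, since both $P_1$ and $P_2$ are fixed by $H$ and $P_1 \ne P_2$. For (e), a direct matrix computation shows that $H$ commutes with $G_1$, and hence with $G_2 = \xi G_1 \xi^{-1}$ (since $\xi$ also commutes with $H$); together with $|H|$ being coprime to $p$ and $|G_i| = q$ being a $p$-power, this gives $HG_i = H \times G_i = H \rtimes G_i$ and $H \cap G_i = \{1\}$. The essential obstacle is condition (d), $H \cap G_1 G_2 = \{1\}$. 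I plan to handle this by writing $\sigma\tau$ explicitly for $\sigma \in G_1$ and $\tau \in G_2$: in the chart $Z = 1$,
$$ \sigma\tau(x, y) = \left(\frac{x}{1 + \mu x} + \lambda, \; \frac{y}{1 + \mu x}\right). $$
Setting this equal to $h(x, y) = (x, \omega^i y)$ for some $h \in H$ and letting $x$ vary on $\mathcal{H}$, the $y$-coordinate forces $\mu = 0$ and $\omega^i = 1$, after which the $x$-coordinate forces $\lambda = 0$. With all hypotheses verified, Corollary \ref{maincor} yields a birational embedding $\psi : \mathcal{H}/H \to \mathbb{P}^2$ of degree $|G_1| + 1 = q + 1$ with two inner Galois points $\psi(\overline{P_1})$ and $\psi(\overline{P_2})$, completing the proof.
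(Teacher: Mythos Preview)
Your proposal is correct and follows essentially the same approach as the paper: both take the Hermitian curve with the standard Galois points $P_1=(1:0:0)$, $P_2=(0:0:1)$ and their groups $G_1,G_2$, and quotient by the cyclic group of order $(q+1)/m$ acting on the $Y$-coordinate, then invoke Corollary~\ref{maincor}. The only difference is that the paper dispatches conditions (d) and (e) with ``easily verified,'' whereas you spell out the commutation of $H$ with $G_1$, $G_2$, and $\xi$, and give an explicit affine computation for $H\cap G_1G_2=\{1\}$; your description of $G_2$ as $\xi G_1\xi^{-1}$ is also just a repackaging of the paper's explicit formula.
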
 

\begin{proof}
Let $P_1=(1:0:0)$ and $P_2=(0:0:1) \in \mathbb{P}^2$. 
Then, $P_1$ and $P_2$ are inner Galois points for the Hermitian curve $\mathcal{H} \subset \mathbb{P}^2$ (\cite{homma}). 
The associated Galois groups are represented by 
\begin{eqnarray*}
G_1&:=&\{(X: Y: Z) \mapsto (X+\alpha Z: Y: Z)\ | \ \alpha^q+\alpha=0\} \ \mbox{ and } \\ 
G_2&:=&\{(X: Y: Z) \mapsto (X: Y: Z+\alpha X)\ | \ \alpha^q+\alpha=0\} 
\end{eqnarray*}
respectively. 
Then, the $4$-tuple $(G_1, G_2, P_1, P_2)$ satisfies conditions (a), (b) and (c). 
Let $sm=q+1$ and let $C_s$ be a cyclic group of order $s$ generated by the automorphism group $(x, y) \mapsto (x, \zeta y)$, where $\zeta$ is a primitive $s$-th root of unity. 
Note that $C_s$ fixes all points in the line $Y=0$. 
Therefore, $C_sP_1=\{P_1\} \ne \{P_2\}=C_sP_2$. 
It is easily verified that $C_s \cap G_1G_2=\{1\}$ and $C_sG_i=C_s \times G_i$. 
Conditions (d), (e) and (f) are satisfied. 
By Corollary \ref{maincor}, the quotient curve $\mathcal{H}/C_s$ has a birational embedding of degree $q+1$ with two inner Galois points. 
On the other hand, the quotient curve $\mathcal{H}/C_s$ has a plane model defined by $y^m=x^q+x$. 
\end{proof}

A similar argument is applicable for the curve $\mathcal{C} \subset \mathbb{P}^2$ defined by $X^3Z+Y^4+Z^4=0$, which has two inner Galois points $P_1=(1:0:0)$ and $P_2=(-1:0:1)$ on the line $Y=0$ (under the assumption $p \ne 2, 3$), by taking $H=\langle \eta \rangle$ with $\eta(x, y)=(x, -y)$ (see also \cite{miura-yoshihara, yoshihara}). 
Here, the associated Galois groups $G_1$ and $G_2$ at $P_1$ and $P_2$ are generated by matrices
$$ \left(\begin{array}{ccc}
\omega & 0 & 0 \\
0 & 1 & 0 \\
0 & 0 & 1 
\end{array}\right) 
\ \mbox{ and } \
\left(\begin{array}{ccc} 
\frac{-\omega}{-\omega+1} & 0 & \frac{2}{-\omega+1} \\
0 & 1 & 0 \\
\frac{1}{-\omega+1} & 0 & \frac{\omega^2}{-\omega+1} 
\end{array}\right)
$$
respectively, where $\omega^2+\omega+1=0$. 
Then, the quotient curve $\mathcal{C}/H$ is the elliptic curve $y^2+x^3+1=0$. 
It is well known that $\mathcal{C}/H$ is isomorphic to the Fermat curve.  
Since the Galois group $G_i$ at $P_i$ fixes $P_i$, the group $\overline{G_i}$ fixes $\overline{P_i}$ for $i=1, 2$.   
Then, $\psi(\overline{P_i})$ is a total inflection point for this embedding $\psi$ (see \cite[III.8.2]{stichtenoth}). 
The following result is similar to \cite[Theorem 3]{fukasawa1}, but the proofs are different.

\begin{theorem}
Let $p \ne 2, 3$. 
For the cubic Fermat curve, there exists a plane model of degree four with two inner Galois points such that they are total inflection points. 
\end{theorem}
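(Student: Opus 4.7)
The plan is to apply Corollary \ref{maincor} to the plane quartic $\mathcal{C}:X^3Z+Y^4+Z^4=0$ with the $4$-tuple $(G_1,G_2,P_1,P_2)$ described just before the statement and with $H=\langle\eta\rangle$, where $\eta(x,y)=(x,-y)$. Since the paper has already identified $\mathcal{C}/H$ with the elliptic curve $y^2+x^3+1=0$, which is well known to be isomorphic to the cubic Fermat curve, any degree-$4$ birational embedding of $\mathcal{C}/H$ produced by the corollary automatically yields a plane quartic model of the Fermat cubic; the extra content is the total-inflection property at the two Galois points.

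First I would verify that $(G_1,G_2,P_1,P_2)$ satisfies conditions (a), (b), (c) of Fact \ref{criterion}. Condition (a) is the statement that $P_1$ and $P_2$ are inner Galois points for $\mathcal{C}$, recorded in \cite{miura-yoshihara,yoshihara}; condition (b) follows at once from the explicit matrices, since a non-trivial element of $G_1$ does not fix $P_2$. For (c) one observes that the line $Y=0$ is stabilised by both $G_1$ and $G_2$ and cuts $\mathcal{C}$ in the reduced divisor $P_1+\sum_{k=0}^{2}(-\omega^k:0:1)$, which is simultaneously equal to $P_1+\sum_{\sigma\in G_1}\sigma(P_2)$ and to $P_2+\sum_{\tau\in G_2}\tau(P_1)$ once one checks, by a short local calculation at $P_1$ and at $(-1:0:1)$, that each of the four intersections is transverse.

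Second I would verify conditions (d), (e), (f) of Corollary \ref{maincor}. Because $G_1$ and $G_2$ both act trivially on the $Y$-coordinate while $\eta$ negates $Y$, the element $\eta$ commutes with every element of $G_1$ and of $G_2$; this gives $HG_i=H\times G_i$ (condition (e)) and shows at the same time that no product $\sigma\tau$ with $\sigma\in G_1$, $\tau\in G_2$ can equal $\eta$, so $H\cap G_1G_2=\{1\}$ (condition (d)). Condition (f) is immediate because $\eta$ fixes $\{Y=0\}$ pointwise, whence $HP_1=\{P_1\}\ne\{P_2\}=HP_2$. Corollary \ref{maincor} then produces a birational embedding $\psi:\mathcal{C}/H\to\mathbb{P}^2$ of degree $|G_1|+1=4$ whose two inner Galois points are $\psi(\overline{P_1})$ and $\psi(\overline{P_2})$, with Galois groups $\overline{G_i}\cong G_i$ of order $3$. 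Finally, since each $\sigma\in G_i$ fixes $P_i$, the quotient $\overline{G_i}$ fixes $\overline{P_i}$; the whole Galois group of order $3=\deg\psi-1$ thus ramifies totally at $\overline{P_i}$, so the tangent line to $\psi(\mathcal{C}/H)$ at $\psi(\overline{P_i})$ meets the image with multiplicity $3+1=\deg\psi$, showing that $\psi(\overline{P_i})$ is a total inflection point, as in \cite[III.8.2]{stichtenoth}.

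The most delicate step is the last one: one has to use the local relationship between the ramification index of the projection $\psi(\mathcal{C}/H)\to\mathbb{P}^1$ from $\psi(\overline{P_i})$ and the intersection multiplicity of the tangent line with the image at $\psi(\overline{P_i})$, which differ by exactly one. The hypothesis $p\ne 2,3$ enters tacitly in several places (existence of a primitive cube root $\omega$, separability of $\eta$, transversality of the four intersections on $Y=0$); everything else reduces to routine verification of the hypotheses of Corollary \ref{maincor}.
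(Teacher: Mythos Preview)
Your proposal is correct and follows exactly the approach sketched in the paper: apply Corollary~\ref{maincor} to the quartic $X^3Z+Y^4+Z^4=0$ with $H=\langle\eta\rangle$, identify $\mathcal{C}/H$ with the Fermat cubic, and then deduce the total-inflection property from the fact that $\overline{G_i}$ fixes $\overline{P_i}$ via \cite[III.8.2]{stichtenoth}. You have simply fleshed out the verifications of conditions (a)--(f) that the paper leaves to the reader under the phrase ``A similar argument is applicable''.
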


In \cite[Theorem 3]{fukasawa1}, we were not able to assert that two Galois points are inflection points, for the embedding provided in the proof. 

\begin{center} {\bf Acknowledgements} \end{center} 
The authors are grateful to Professor Takehiro Hasegawa for helpful conversations.

\end{document}